\documentclass[11pt]{amsart}
\usepackage{amsmath,amssymb,enumerate,bbm,enumerate}
\numberwithin{equation}{section}
\usepackage{xcolor} 
\usepackage{graphicx}
\makeatletter


\newtheorem{theorem}{Theorem}[section]

\newtheorem{theo}[theorem]{Theorem}
\newtheorem{pro}[theorem]{Proposition}
\newtheorem{thm}[theorem]{Theorem}

\theoremstyle{definition}

\newtheorem{defi}[theorem]{Definition}

\theoremstyle{remark}
\newtheorem{remark}[theorem]{Remark}

 \theoremstyle{plain}
\newtheorem*{namedthm}{\namedthmname}
\newcounter{namedthm}

\makeatletter
\newenvironment{named}[1]
  {\def\namedthmname{#1}%
   \refstepcounter{namedthm}%
   \namedthm\def\@currentlabel{#1}}
  {\endnamedthm}


 \newcommand{\R}{\mathbb R}

 \newcommand{\e}{\varepsilon}
 \newcommand{\f}{\varphi}
 
 \newcommand{\p}{\psi}

 \newcommand \mrm{\mathrm}


\author{Vincent Guedj}
\address{Vincent Guedj, Institut de Math\'ematiques de Toulouse  \\ Universit\'e de Toulouse, CNRS \\
UPS  IMT\\
118 route de Narbonne \\
F-31062 Toulouse cedex 09}
\email{\href{mailto:vincent.guedj@math.univ-toulouse.fr}{vincent.guedj@math.univ-toulouse.fr}}
\urladdr{\href{https://www.math.univ-toulouse.fr/~guedj}{https://www.math.univ-toulouse.fr/~guedj/}}

\author{Chinh H. Lu}
\address{Chinh H. Lu, Laboratoire de Math\'ematiques d'Orsay,
 Univ. Paris-Sud,
 CNRS, Universit\'e Paris-Saclay,
  91405 Orsay, France}
\email{\href{mailto:hoang-chinh.lu@math.u-psud.fr}{hoang-chinh.lu@math.u-psud.fr}}
\urladdr{\href{https://www.math.u-psud.fr/~lu/}{https://www.math.u-psud.fr/~lu/}}

\author{Ahmed Zeriahi}
\address{Ahmed Zeriahi, Institut de Math\'ematiques de Toulouse  \\ Universit\'e de Toulouse, CNRS \\
UPS  IMT\\
118 route de Narbonne \\
F-31062 Toulouse cedex 09}
\email{\href{mailto:ahmed.zeriahi@math.univ-toulouse.fr}{ahmed.zeriahi@math.univ-toulouse.fr}}
\urladdr{\href{https://www.math.univ-toulouse.fr/~zeriahi/}{https://www.math.univ-toulouse.fr/~zeriahi/}}

\thanks{The authors are partially supported by the ANR project GRACK}

\subjclass[2010]{53C44, 32W20, 58J35}

\title[Stability of complex Monge-Amp\`ere flows]{Stability of solutions to   complex Monge-Amp\`ere flows}
\date{\today}

\usepackage{hyperref}
\hypersetup{
    bookmarks=true,         
    unicode=false,          
    pdftoolbar=true,        
    pdfmenubar=true,        
    pdffitwindow=false,     
    pdfstartview={FitH},    
    pdftitle={Stability of solutions to complex Monge-Ampere flows},    
    pdfauthor={Vincent Guedj,Chinh H. Lu, Ahmed Zeriahi},     
    colorlinks=true,       
   linkcolor=black,          
    citecolor=black,        
    filecolor=black,      
    urlcolor=black}           


\begin{document}
\begin{abstract}
We establish a stability result for elliptic and parabolic complex Monge-Amp\`ere equations on compact K\"ahler manifolds,
which applies in particular to  the K\"ahler-Ricci flow. \\

\noindent {\it Dedicated to Jean-Pierre Demailly on the occasion of his 60th birthday.}
\end{abstract}
 \setcounter{tocdepth}{1}

\maketitle

\tableofcontents

\section*{Introduction}

 Several important problems of K\"ahler geometry  (e.g. finding canonical metrics) necessitate to study the existence and regularity of solutions to certain degenerate complex Monge-Amp\`ere equations.
The fundamental work of Yau \cite{Yau78} guarantees the existence of smooth solutions to a large class of equations.
 It  has been generalized in various interesting directions, providing weak solutions to several degenerate situations.
We refer the reader to 
\cite{GZbook2017} for a recent overview.

In this paper we are interested in the stability properties of solutions to such equations. 
Let $X$ be a compact K\"ahler manifold of dimension $n$, $\theta$ be a K\"ahler form  and $dV$ a volume form on $X$.
Fix $0 \leq f$ a positive density on $X$ and let $\varphi$ be a $\theta$-plurisubharmonic function
solving 
\begin{equation} \label{eq:AY-equation}
{\rm MA}_{\theta}(\varphi) = e^{ \varphi} f d V,
\end{equation}
where   ${\rm MA}_{\theta}(\varphi) = (\theta + dd^c \varphi)^n$ denotes the  
complex Monge-Amp\`ere measure with respect to the form $\theta$.

If $(f_j d V)$ is a sequence of Borel measures converging in total variation to $f d V$ then, it follows from \cite{GZ-stability12} that
the corresponding sequence of solutions $(\varphi_j)$ converges in $L^1(X)$ to  $\varphi$.  
Our aim is to establish a quantitative version of this convergence. 


\smallskip

Our first main result gives a satisfactory answer for $L^p$ densities with respect to Lebesgue measure, $p>1$
(see \cite{Kol03,Blo03,GZ-stability12} for related results).  
 
\begin{named}{Theorem A} 	\label{thm: main thm A} 
Fix $p>1$ and $0\leq f,g\in L^p(X,dV)$. If  $\varphi,\psi$ are  bounded $\theta$-plurisubharmonic  functions on $X$  such that 
        \[
        {\rm MA}_{\theta}(\varphi) =  e^{\varphi} fdV \ ; \   {\rm MA}_{\theta}(\psi) = e^{\psi} gdV,
        \]
        then
        \[
        \Vert \varphi- \psi \Vert_{\infty} \leq   C \|f-g\|_p^{1/n}.  
        \]
    where $C>0$  depends on $p,n,X,\theta,dV$ and uniform bounds on   $\Vert f \Vert_p, \Vert g \Vert_p$.
\end{named}

\vskip.3cm
 
We use some ideas of the proof of \ref{thm: main thm A} to 
establish a stability result for parabolic complex Monge-Amp\`ere flows.
We consider the   equation 
   \begin{equation} \label{eq: PMAE}
  (\omega_t + dd^c \f_t)^n = e^{\dot \f + F (t, \cdot, \f)} f  d V, 
  \end{equation}
 in $X_T := ]0,T[ \times X$,
 where $0<T<+\infty$ and
 \begin{itemize}
 \item  $(\omega_t)_{t \in [0,T]}$ is a smooth family of  K\"ahler forms on $X$;
 \item there exists a fixed K\"ahler form $\theta$ such that $\theta \leq \omega_t$ for all $t$;
 \item $F =F(t,x,r) : \Hat{ X}_T := [0,T] \times X \times \R \rightarrow \mathbb{R}$ is smooth, non-decreasing in the last variable, uniformly Lipschitz in the first and the last variables, i.e. 
$\exists L  > 0$ s.t. $\forall ((t_1,t_2),x,(r_1,r_2)) \in [0,T]^2 \times X \times \R^2, $
\begin{equation} \label{eq:Lip}
  \vert F (t_1,x,r_1) - F (t_2,x,r_2) \vert \leq L \left ( \vert r_1 - r_2 \vert +   \vert t_1 - t_2 \vert \right), 
\end{equation}
 \item $ 0< f\in \mathcal{C}^{\infty}(X,\mathbb{R})$.
 \end{itemize}
 
 Our second main result is the following: 
  
  \begin{named}{Theorem B} \label{thm: main thm B} 
  Fix  $F, G : \Hat{ X}_T := [0,T] \times X \times \R \rightarrow \mathbb{R}$ satisfying the above conditions and fix $p>1$. Assume that $\f : [0,T] \times X \longrightarrow \R$ is a smooth solution to the parabolic equation \eqref{eq: PMAE} with data $(F,f)$ and $\psi :[0,T[ \times X \longrightarrow \R$ is a smooth solution to   \eqref{eq: PMAE} with data $(G,g)$.
  Then 
$$
 \sup_{X_T} \vert \f - \psi\vert  \leq  \sup_X \vert \f_0 - \psi_0\vert + T \, \sup_{\Hat{X}_T} \vert F- G \vert +  A \,  \Vert g - f \Vert_p^{1 \slash n},
 $$
 where $A > 0$ is a constant depending on $X, \theta, n,  p, L$, 
 a uniform bound on  $ \f_0,\psi_0, \dot \f_0, \dot \p_0$ on $X$ 
 and a uniform bound on $\Vert f \Vert_p$ and $\Vert g\Vert_p$. 
 \end{named}

  \medskip

\noindent {\bf D\'edicace.} {\it  C'est un plaisir  de contribuer \`a  ce volume en l'honneur de Jean-Pierre Demailly, 
dont nous appr\'ecions l'exigence et la g\'en\'erosit\'e.
Ses notes de cours \cite{Dem89,Dem11} ont fortement contribu\'e au d\'eveloppement de la th\'eorie du pluripotentiel en France, nous lui en sommes tr\`es reconnaissants.
}

  \medskip
  
\noindent   {\bf Acknowledgement. } We thank the referee for valuable comments which improve the presentation of the paper.

\section{Preliminaries}
 
 \subsection{ Elliptic complex Monge-Amp\`ere equations} Let $(X,\theta)$ be a compact K\"ahler manifold of dimension $n$.  
 
 A function $u: X\rightarrow \mathbb{R}\cup \{-\infty\}$ is called {\it quasi-plurisubharmonic} (quasi-psh for short) on $X$ if 
 it can locally be written $u= \rho +\varphi$, where $\varphi$ is a plurisubharmonic function and $\rho$ is a smooth function. 
 
 A function $u$ is called {\it $\theta$-plurisubharmonic} ($\theta$-psh for short) if it is quasi-psh on $X$ and $\theta+dd^c u \geq 0$ in the weak sense of currents on $X$. The set of all $\theta$-psh functions on $X$ is denoted by ${\rm PSH}(X,\theta)$. 
 
\smallskip 
 
It follows from the seminal work of Bedford and Taylor \cite{Bedford_Taylor_1976Dirichlet,Bedford_Taylor_1982Capacity} 
that 
the Monge-Amp\`ere measure $(\theta+dd^c u)^n=:{\rm MA}_{\theta}(u)$  is well-defined 
when $u$ is a  bounded $\theta$-psh function, and it satisfies several continuity properties.  

We refer the readers to \cite{GZbook2017} for a detailed exposition of global pluripotential theory. In this note we will need the following {\it comparison principle}.

 \begin{pro}
 \label{pro: comparison principle exponential}
 Assume that $u,v \in {\rm PSH}(X,\theta)\cap L^{\infty}(X)$ are such that 
 $e^{- u} {\rm MA}_{\theta}(u) \geq e^{ - v} {\rm MA}_{\theta}(v)$ on $X$. Then $u\leq v$ on $X$. 
 \end{pro}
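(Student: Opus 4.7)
The plan is to first show $\MA_\theta(u)(\{u>v\})=0$ by combining the classical comparison principle with the exponential damping built into the hypothesis, and then to invoke the domination principle to upgrade this to the pointwise bound $u\le v$. Note that the hypothesis is equivalent to the measure inequality $\MA_\theta(u)\ge e^{u-v}\MA_\theta(v)$ on $X$.

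For the key step, fix $\varepsilon>0$ and consider the Borel set $U_\varepsilon:=\{u>v+\varepsilon\}$. The classical comparison principle of Bedford--Taylor (in its compact K\"ahler version, see \cite{GZbook2017}), applied to the bounded $\theta$-psh functions $u$ and $v+\varepsilon$, yields
$$\int_{U_\varepsilon}\MA_\theta(u)\;\le\;\int_{U_\varepsilon}\MA_\theta(v).$$
On the other hand, since $u-v>\varepsilon$ on $U_\varepsilon$, the measure inequality above restricts to
$$\int_{U_\varepsilon}\MA_\theta(u)\;\ge\;e^{\varepsilon}\int_{U_\varepsilon}\MA_\theta(v).$$
Combining these, $(e^\varepsilon-1)\int_{U_\varepsilon}\MA_\theta(v)\le 0$, which forces $\MA_\theta(v)(U_\varepsilon)=0$ and, in turn, $\MA_\theta(u)(U_\varepsilon)=0$. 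Letting $\varepsilon\searrow 0$ along a sequence, the sets $U_\varepsilon$ exhaust $\{u>v\}$, so
$$\MA_\theta(u)\bigl(\{u>v\}\bigr)=0.$$

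To conclude, I would invoke the domination principle for bounded $\theta$-psh functions (a standard consequence of the comparison principle recorded in \cite{GZbook2017}): if $u,v\in\PSH(X,\theta)\cap L^\infty(X)$ and $\MA_\theta(u)(\{u>v\})=0$, then $u\le v$ on $X$. The main obstacle, were one not to quote this as a black box, is precisely the passage from vanishing of the Monge--Amp\`ere mass on $\{u>v\}$ to emptiness of that set: an MA measure can fail to detect non-pluripolar open subsets, and the domination principle closes this gap through a further application of the comparison principle after a small perturbation of $v$. Everything else in the argument is bookkeeping around the classical comparison principle and the exponential factor in the hypothesis.
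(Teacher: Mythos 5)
Your argument is sound and it is essentially the standard proof that the paper delegates to \cite[Lemma 2.5]{DDNL16} rather than reproducing: rewrite the hypothesis as $\MA_\theta(u)\ge e^{u-v}\MA_\theta(v)$, use the comparison principle on $U_\varepsilon=\{u>v+\varepsilon\}$ to force $\MA_\theta(v)(U_\varepsilon)=0$, exhaust $\{u>v\}$ by the $U_\varepsilon$, and finish with the domination principle. One small correction, though: the domination principle as recorded in the paper (Proposition \ref{pro: domination principle} with $D=X$, applied to $v$ and $u$ in that order) reads ``if $\MA_\theta(v)(\{u>v\})=0$ then $u\le v$,'' i.e.\ the vanishing hypothesis is on the Monge--Amp\`ere measure of the \emph{smaller} function on the bad set, not on $\MA_\theta(u)$ as you wrote. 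This is not merely cosmetic: the comparison principle only gives $\MA_\theta(u)(\{u>v\})\le\MA_\theta(v)(\{u>v\})$, so vanishing of $\MA_\theta(u)$ on that set would be the weaker hypothesis and would not plug directly into the proof of Proposition \ref{pro: domination principle}, which perturbs $v$ to $(1-\varepsilon)v$ and relies on the chain $\int_{\{v<u_\ast\}}\MA_\theta(v)\le\cdots$. Fortunately your intermediate step already established $\MA_\theta(v)(U_\varepsilon)=0$ (you deduced this first, before $\MA_\theta(u)(U_\varepsilon)=0$), so after the exhaustion you do have $\MA_\theta(v)(\{u>v\})=0$ and the domination principle applies as stated in the paper; just cite it with that hypothesis.
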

 
 This result is well known (a proof can be found in \cite[Lemma 2.5]{DDNL16}). 
  We shall also need the following   version of the {\it  domination principle} :

 \begin{pro}\label{pro: domination principle}
  Fix a non-empty open subset $D\subset X$
and let $u,v$ be bounded $\theta$-psh functions on $X$ such that  for all $ \zeta \in \partial D$,
 	\[
 	\limsup_{D\ni z\to \zeta} (u-v)(z)\geq 0. 
 	\]
 	
 	If ${\rm MA}_{\theta}(u)(\{u<v\}\cap D)=0$ then $u\geq v$ in $D$. 
 \end{pro}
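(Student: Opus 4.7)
The plan is to argue by contradiction, reducing via a perturbation $v \mapsto v - \varepsilon$ to a strict boundary gap, and then applying a Bedford--Taylor type comparison on a relatively compact subset of $D$.

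\textbf{Reduction.} It suffices to prove $u \geq v - \varepsilon$ in $D$ for every $\varepsilon > 0$; letting $\varepsilon \downarrow 0$ then yields $u \geq v$. Fix $\varepsilon > 0$ and set $v_\varepsilon := v - \varepsilon$, which is $\theta$-psh with $\mathrm{MA}_\theta(v_\varepsilon) = \mathrm{MA}_\theta(v)$. Since $\{u < v_\varepsilon\} \subset \{u < v\}$, the measure hypothesis transfers to $\mathrm{MA}_\theta(u)(\{u < v_\varepsilon\} \cap D) = 0$, while the boundary condition strengthens to
\[
\limsup_{D \ni z \to \zeta}(u - v_\varepsilon)(z) \geq \varepsilon > 0 \quad \text{for every } \zeta \in \partial D.
\]
Suppose, for contradiction, that $\Omega := \{z \in D : u(z) < v_\varepsilon(z)\}$ is non-empty.

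\textbf{Localization and comparison.} Combining the strict boundary gap with the upper semicontinuity of $u$ and $v$, I would argue that, after regularizing both functions by decreasing sequences of smooth $\theta$-psh functions $u_k \searrow u$, $v_k \searrow v$ (Demailly regularization on the Kähler manifold $X$), the corresponding open sublevel sets $\Omega^{(k)} := \{z \in D : u_k(z) + \varepsilon < v_k(z)\}$ are relatively compact in $D$. On each such relatively compact open set the classical Bedford--Taylor comparison principle gives
\[
\int_{\Omega^{(k)}} \mathrm{MA}_\theta(v_k - \varepsilon) \leq \int_{\Omega^{(k)}} \mathrm{MA}_\theta(u_k).
\]
Passing to the limit $k \to \infty$ (using continuity of the Monge--Amp\`ere operator along decreasing sequences of bounded $\theta$-psh functions) and invoking $\mathrm{MA}_\theta(u)(\Omega) = 0$, one deduces $\mathrm{MA}_\theta(v_\varepsilon)(\Omega) = 0$. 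A further perturbation $v_\varepsilon \mapsto (1-\delta) v_\varepsilon + \delta \rho$, for $\rho$ a smooth $\theta$-psh function with $\theta + dd^c \rho$ strictly positive, yields a $\theta$-psh function whose Monge--Amp\`ere measure dominates $\delta^n (\theta + dd^c \rho)^n$, a strictly positive smooth volume form. The vanishing statement above then forces the analogue of $\Omega$ for this perturbation to have zero Lebesgue measure; letting $\delta \downarrow 0$ and using upper semicontinuity gives $\Omega = \emptyset$, a contradiction.

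\textbf{Main obstacle.} The delicate step is the localization. The set $\{u < v_\varepsilon\}$ is not a priori open, because $v$ is only upper (not lower) semicontinuous, so the $\limsup$ boundary condition does not immediately entail that $\Omega$ stays away from $\partial D$; a naive application of the comparison principle on $\Omega$ is therefore unjustified. Producing the relatively compact sublevel sets $\Omega^{(k)}$ via smooth approximations of $u$ and $v$, and controlling them uniformly using the strict boundary gap together with the upper semicontinuity of $u$ and $v$, is the technical heart of the argument.
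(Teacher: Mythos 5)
Your overall strategy (perturb, apply a comparison principle, conclude Lebesgue-a.e. and then everywhere) is the right one, but there is a genuine gap in the localization step, and the specific fix you propose there does not work.

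The claim that the sets $\Omega^{(k)} = \{u_k + \varepsilon < v_k\}$ are relatively compact in $D$ is false in general. The boundary hypothesis $\limsup_{D\ni z\to\zeta}(u-v)(z)\geq 0$ only asserts the existence, near each $\zeta\in\partial D$, of \emph{some} sequence along which $u-v$ becomes $\geq -\epsilon$; it in no way forces $u-v\geq-\varepsilon$ uniformly near $\partial D$. Consequently the set $\{u<v-\varepsilon\}$ can cluster at $\partial D$, and so can its regularized versions — in fact the regularization makes things worse, because $v_k\geq v$ pushes $u_k-v_k$ downward. So the attempted reduction to a Bedford–Taylor comparison on a relatively compact open set breaks down, and this is precisely the step you yourself flag as "the technical heart" without resolving it. The resolution is not a clever regularization: the correct form of the comparison principle does not require $\{u<v\}\Subset D$ at all. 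The paper simply quotes the global comparison principle of Cegrell–Ko\l{}odziej–Zeriahi \cite{CKZ11}, which is stated exactly under the $\limsup$ boundary hypothesis and yields $\int_{\{u<v_\varepsilon\}\cap D}\mathrm{MA}_\theta(v_\varepsilon)\leq\int_{\{u<v_\varepsilon\}\cap D}\mathrm{MA}_\theta(u)$ directly. That lemma is the missing ingredient you should cite rather than re-derive.

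A secondary difference worth noting: the paper perturbs multiplicatively, setting $v_\varepsilon=(1-\varepsilon)v$ after normalizing $v\geq 0$. This single perturbation simultaneously preserves $v_\varepsilon\leq v$ (so the $\mathrm{MA}_\theta(u)$-null hypothesis transfers) \emph{and} yields the crucial strict positivity $\mathrm{MA}_\theta(v_\varepsilon)\geq\varepsilon^n\theta^n$, which is what lets one pass from a null set to an empty set. Your additive shift $v-\varepsilon$ leaves $\mathrm{MA}_\theta$ unchanged, forcing you to introduce a second perturbation $(1-\delta)v_\varepsilon+\delta\rho$ and a second pass through the comparison principle. That is workable but longer; the multiplicative trick collapses both perturbations into one.
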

 
 \begin{proof}
 	Adding a large constant to both $u$ and $v$, we can assume that $v \geq 0$. For each $\varepsilon>0$ consider $v_{\varepsilon}:= (1-\varepsilon) v$.  Then $v_{\varepsilon} \in {\rm PSH}(X,\theta)$ and $v_{\varepsilon}\leq v$, hence $\limsup_{D\ni z\to \partial D} (u(z)-v_{\varepsilon}(z)) \geq 0$. 
 	The comparison principle (see \cite{CKZ11}) yields
 	\[
 	\int_{\{u<v_{\varepsilon}\}\cap D}\theta_{v_{\varepsilon}}^n \leq \int_{\{u<v_{\varepsilon}\}\cap D} \theta_u^n  \leq \int_{\{u<v\}\cap D} \theta_u^n=0. 
 	\]
 	Since $\theta_{v_\varepsilon}^n \geq \varepsilon^n \theta^n$, we deduce that $u\geq v_{\varepsilon}$ almost everywhere (with respect to Lebesgue measure) in the open set $D$, hence everywhere in $D$. The result follows by letting $\varepsilon\to 0$. 
 \end{proof}

\subsection{Complex Monge-Amp\`ere flows}
 We recall the following definition of (sub/super)solution   which will be used in this note. 
 
 \begin{defi} 
 Let $\f :[0,T[ \times X \longrightarrow \R$ be a function satisfying:
\begin{itemize}
\item $\f$ is continuous in $X_T := [0,T] \times X$,
\item for any $x \in X$, the function $\f (\cdot,x)$ is $\mathcal{C}^1$ in $[0,T]$ and $\dot \f = \partial_t \f$ its partial derivative in $t$ is continuous in $[0,T] \times X$;
\item   for any $t \in [0,T]$ the function $\f_t $ is  bounded  and $\omega_t$-plurisubharmonic.
\end{itemize} 
We say  that the function $\f$ is a
\begin{itemize}
\item solution to the equation \eqref{eq: PMAE} with data $(F,f)$ if for any $t \in ]0,T[$,
\begin{equation*} 
(\omega_t + dd^c \f_t)^n =  e^{\dot {\f} (t,\cdot) + F (t, \cdot, \f_t)} f  d V.
\end{equation*}
\item subsolution to the equation \eqref{eq: PMAE} with data $(F,f)$ if for any $t \in ]0,T[$,
 \begin{equation*}
(\omega_t + dd^c \f_t)^n \geq  e^{\dot {\f} (t,\cdot) + F (t, \cdot, \f_t)} f  d V.
\end{equation*}
\item supersolution to  \eqref{eq: PMAE} with data $(F,f)$ if for any $t \in ]0,T[$, 
\begin{equation*}
(\omega_t + dd^c \f_t)^n \leq   e^{\dot {\f} (t,\cdot) + F (t, \cdot, \f_t)} f  d V.
\end{equation*}
\end{itemize}
 \end{defi} 
 
All the above inequalities have to be understood in the weak sense of currents:
the LHS is a well defined Borel measure since $\f_t$ is a bounded $\omega_t$-psh function
 (see \cite{Bedford_Taylor_1976Dirichlet}),
 while the RHS is a well defined measure which is absolutely continuous with respect to Lebesgue measure.

 \section{Stability in the elliptic case}

 \subsection{A general semi-stability result}
\ref{thm: main thm A} is a consequence of the following more general result which generalizes stability results where the right-hand side does not depend on the unknown function (see \cite{Kol03},  \cite{DZ10}).
 
 \begin{thm}	\label{thm: thm Abis}
     Fix $p>1$. Assume that $0\leq f,g\in L^p(X,dV)$ and $\varphi,\psi$ are  bounded $\theta$-plurisubharmonic  
     functions on $X$  such that 
        \[
        {\rm MA}_{\theta}(\varphi) \geq  e^{\varphi} fdV \
        \text{ and } \   {\rm MA}_{\theta}(\psi) \leq  e^{\psi} gdV. 
        \]
        Then there is a constant $C>0$ depending on $p,n,X,\theta$ and a uniform bound on $\log \Vert f \Vert_p$ and $\log \Vert g\Vert_p$ such that
        \[
        \varphi \leq \psi + \left(C + 2 \mrm{osc}_X \varphi  + 
         2 \mrm{osc}_X \psi\right) \exp{\left(\frac{\mrm{osc}_X \varphi}{n}\right)} \|(g - f)_+\|_p^{1/n}.  
        \]
        
\end{thm}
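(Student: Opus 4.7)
\medskip

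\noindent \textbf{Proof plan.}
The plan is to apply the exponential comparison principle (Proposition~\ref{pro: comparison principle exponential}) to $\varphi$ and a suitable $\theta$-psh perturbation of $\psi$ constructed so as to ``absorb'' the discrepancy $h := (g-f)_+$. Setting $\tau := \|h\|_p^{1/n}$, the case $\tau = 0$ (i.e.\ $g \leq f$ a.e.) is immediate from Proposition~\ref{pro: comparison principle exponential}, since then
\[
e^{-\varphi}\mrm{MA}_\theta(\varphi) \;\geq\; f\, dV \;\geq\; g\, dV \;\geq\; e^{-\psi}\mrm{MA}_\theta(\psi)
\]
forces $\varphi \leq \psi$. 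One may therefore assume $\tau > 0$.

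The first step will be to invoke Kolodziej's existence theorem to produce an auxiliary function $\rho \in \mrm{PSH}(X,\theta)$, normalized by $\sup_X \rho = 0$, solving
\[
(\theta + dd^c \rho)^n \;=\; \alpha\, h\, dV + \beta\, \theta^n,
\]
for $\alpha, \beta > 0$ chosen so that the total mass equals $V = \int_X \theta^n$ and so that the resulting density has $L^p$-norm uniformly controlled in terms of $\log\|f\|_p$ and $\log\|g\|_p$. Kolodziej's $L^\infty$ estimate then yields $-C_1\tau \leq \rho \leq 0$ for a constant $C_1$ depending only on the parameters of the theorem; the exponent $1/n$ in $\tau$ is precisely the one dictated by the $n$-homogeneity of the Monge-Amp\`ere operator.

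In a second step I would apply Proposition~\ref{pro: comparison principle exponential} to $\varphi$ and a test function $v$ built from $\psi$, $\rho$ and a constant shift of order $\tau$, for instance the convex combination $v := (1-\delta)\,\psi + \delta\,\rho + s$, with $\delta \in (0,1)$ and $s > 0$ to be tuned. Since $v$ is $\theta$-psh and $v \leq \psi + \delta\,\mrm{osc}_X\rho + s \leq \psi + C_1\delta\tau + s$, once the pointwise inequality
\[
e^{-\varphi}\mrm{MA}_\theta(\varphi) \;\geq\; e^{-v}\mrm{MA}_\theta(v)
\]
is established, Proposition~\ref{pro: comparison principle exponential} yields $\varphi \leq v$ and hence the announced bound. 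To verify this inequality one would combine the lower bound $e^{-\varphi}\mrm{MA}_\theta(\varphi) \geq f\,dV$ with the mixed Monge-Amp\`ere expansion of $\bigl((1-\delta)(\theta + dd^c\psi) + \delta(\theta + dd^c\rho)\bigr)^n$ together with the hypothesis $\mrm{MA}_\theta(\psi) \leq e^\psi(f+h)\,dV$ and the subsolution estimate $\delta^n\, \mrm{MA}_\theta(\rho) \geq \delta^n\alpha\,h\,dV$; the $h$-contribution, which obstructs a direct comparison, is then absorbed by the $\rho$-term, and a balanced choice of $\delta$ and $s$ of order $\tau^{(n-1)/n}$ and $\tau$ respectively produces the scaling $\Lambda = (C + 2\mrm{osc}_X\varphi + 2\mrm{osc}_X\psi)\exp(\mrm{osc}_X\varphi/n)\tau$, the factor $\exp(\mrm{osc}_X\varphi/n)$ arising from the conversion between $e^{-\varphi}$- and $e^{-\psi}$-weighted densities on $\{h > 0\}$.

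The hard part will be the pointwise comparison in the second step: Proposition~\ref{pro: comparison principle exponential} requires controlling $e^{-v}\mrm{MA}_\theta(v)$ \emph{from above}, whereas the convex-combination recipe for $v$ only yields a \emph{lower} bound on $\mrm{MA}_\theta(v)$ through the mixed Monge-Amp\`ere inequality. Closing this gap will presumably require refining the construction of $v$---for example, replacing the convex combination by an envelope of the form $\max(\psi, \rho + \mathrm{const})$, so that $\mrm{MA}_\theta(v)$ coincides with $\mrm{MA}_\theta(\psi)$ outside a controlled contact set---and carefully tracking the mixed terms $(\theta + dd^c\psi)^k \wedge (\theta + dd^c\rho)^{n-k}$. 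The precise orchestration of these estimates is what should produce the polynomial prefactor $C + 2\mrm{osc}_X\varphi + 2\mrm{osc}_X\psi$ together with the exponential amplification $\exp(\mrm{osc}_X\varphi/n)$ in the final constant.
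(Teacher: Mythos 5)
Your overall strategy correctly identifies the two key tools---Ko{\l}odziej's existence/$L^\infty$ theorem for an auxiliary potential $\rho$ absorbing $(g-f)_+$, and the exponential comparison principle---but you perturb the \emph{wrong} function, and you yourself flag the resulting gap without closing it. Since Proposition~\ref{pro: comparison principle exponential} requires $e^{-\varphi}\mrm{MA}_\theta(\varphi) \geq e^{-v}\mrm{MA}_\theta(v)$, building $v$ from $\psi$ means you must bound $\mrm{MA}_\theta(v)$ from \emph{above}, whereas the mixed Monge--Amp\`ere expansion for a convex combination $(1-\delta)\psi + \delta\rho$ only gives a \emph{lower} bound. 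That is a genuine obstruction, not a technicality to be massaged away; neither the convex-combination $v$ nor the envelope $\max(\psi,\rho+\mathrm{const})$ you mention in passing is shown to work, and the claimed balancing ``$\delta \sim \tau^{(n-1)/n}$, $s\sim\tau$'' is left entirely schematic.

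The fix is to perturb the \emph{subsolution} $\varphi$ rather than the supersolution $\psi$. Set $\varepsilon := e^{\sup_X\varphi/n}\|(g-f)_+\|_p^{1/n}$, solve $\mrm{MA}_\theta(\rho) = (a + (g-f)_+/\|(g-f)_+\|_p)\,dV$ with $\sup_X\rho=0$ (so $-C_1\le\rho\le 0$, with $C_1$ uniform --- not $-C_1\tau\le\rho$ as you write; the $L^p$ density is $\le 2$, so the bound on $\rho$ is uniform, and the smallness enters only through the coefficient $\varepsilon$ in front of $\rho$), and consider
\[
\varphi_\varepsilon := (1-\varepsilon)\varphi + \varepsilon\rho - C_2\varepsilon + n\log(1-\varepsilon), \qquad C_2 := -\inf_X\varphi.
\]
Now the lower bound $\mrm{MA}_\theta(\varphi_\varepsilon) \geq (1-\varepsilon)^n\mrm{MA}_\theta(\varphi) + \varepsilon^n\mrm{MA}_\theta(\rho)$ works in your favor: for a \emph{sub}solution a lower bound on the Monge--Amp\`ere mass is exactly what is wanted. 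Since $\varepsilon^n/\|(g-f)_+\|_p = e^{\sup_X\varphi} \geq e^{\varphi}$ and $\varphi_\varepsilon\le\varphi+n\log(1-\varepsilon)\le\varphi$, one gets $\mrm{MA}_\theta(\varphi_\varepsilon) \geq e^{\varphi_\varepsilon}(f + (g-f)_+)\,dV \geq e^{\varphi_\varepsilon}g\,dV$, so Proposition~\ref{pro: comparison principle exponential} yields $\varphi_\varepsilon \leq \psi$ directly, and unwinding the definition of $\varphi_\varepsilon$ gives the stated estimate. (One must first dispose of the trivial ranges: $\|(g-f)_+\|_p=0$ by the comparison principle, and $\varepsilon \ge 1/2$ by a crude $\sup\varphi - \inf\psi$ bound.) This reversal of which side to perturb is the missing idea in your argument.
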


\begin{proof}
We use a perturbation argument inspired by an idea of Ko{\l}odziej \cite{Kol96} (see also  \cite[proof of Theorem 3.11]{cuong2014weak})  who considered the local case. 

  For simplicity we normalize $\theta$ and $dV$ so that $\int_X dV = {\rm Vol}(\theta) = 1$, 
  and we denote by $\|f\|_p$ the $L^p$-norm of $f$ with respect to the volume form $dV$. 
  We assume that $\|f\|_p,\|g\|_p$ are uniformly bounded away from zero and infinity
  (i.e. $\log \|f\|_p, \log \|g\|_p$ are uniformly bounded).
    
If $\Vert (g-f)_+\Vert_p = 0$ then $g \leq f$ almost everywhere in $X$. In this case, $\varphi$ is a subsolution and $\psi$ is a supersolution to the same complex Monge-Amp\`ere equation. Then the comparison principle  (Proposition \ref{pro: comparison principle exponential}) yields $\f \leq \psi$ in $X$, which proves the result. 

We assume in the sequel that $\|(g-f)_+\|_p>0$.
Integrating the inequality ${\rm MA}_\theta (\f) \geq e^\f f d V$, we see that 
\[
\inf_X \varphi  \leq - \log \left (\int_X f dV \right ) \leq -\log \|f\|_p, 
\]
hence $\sup_X \f \leq {\rm osc}_X  \f - \log \|f\|_p$.
Similarly   $\sup_X \psi \geq - \log \|g\|_p$, hence $- \inf_X \psi \leq {\rm osc}_X \psi + \log \|g\|_p$.

 Set $\varepsilon:= e^{\sup_X \varphi/n} \|(g - f)_+\|_p^{1/n}$. If $\varepsilon \geq 1\slash 2$ then 
\begin{flalign*}
	\sup_X &(\varphi-\psi) \leq \sup_X \varphi - \inf_X \psi \\
	& \leq  2 \left( \sup_X \varphi - \inf_X \psi \right) \exp{\left(\frac{\sup_X \varphi}{n}\right)} \|(g - f)_+\|_p^{1/n},
\end{flalign*}
as desired. 

So we can assume that $\varepsilon<1\slash 2$.   H\"older inequality yields
\[
\int_X \frac{(g - f)_+}{\|(g - f)_+\|_p} dV \leq 1. 
\]  

It follows from  \cite{Kolodziej_1998Monge} that there exists a bounded  $\theta$-psh function $\rho$  such that 
\begin{equation*}  
{\rm MA}_{\theta}(\rho) = hdV := \left (a + \frac{(g - f)_+}{\|(g - f)_+\|_p}\right)dV, \ \sup_X \rho =0,
\end{equation*}
where $a\geq 0$ is a normalization constant to insure that $\int_X hdV = \int_X dV$.  
Since $\|h\|_p\leq 2$ the uniform estimate of Kolodziej \cite{Kolodziej_1998Monge}  guarantees 
\[
-C_1\leq \rho \leq 0,
\] where $C_1$ only  depends on $p,\theta,n$.   

Observe that one can use here soft techniques and avoid the use of Yau's 
Theorem (see \cite{eyssidieux2011viscosity,  GZbook2017}). Consider now 
\[
\varphi_{\varepsilon}:= (1-\varepsilon)\varphi + \varepsilon \rho -C_2 \varepsilon + n\log (1-\varepsilon),
\]
 where $C_2$ is a positive constant to be specified hereafter, and $\varepsilon<1/2$ is defined as above. 
 Then $\varphi_{\varepsilon}$ is a  bounded $\theta$-psh function, and a direct computation shows that 
\begin{eqnarray*}
        {\rm MA}_{\theta}( \varphi_{\varepsilon}) & \geq & (1-\varepsilon)^n{\rm MA}_{\theta}(\varphi) + \varepsilon^n {\rm MA}_{\theta}(\rho)\\
       &  \geq & e^{\varphi + n\log (1-\varepsilon)} fdV+ e^{\varphi}(g - f)_+ dV.  
\end{eqnarray*}
We choose $C_2=-\inf_X \varphi$ so  $\rho-\varphi\leq C_2$ and 
$\varphi_{\varepsilon} \leq \varphi + n\log (1-\varepsilon) \leq \varphi$. Thus
\begin{equation*}  
{\rm MA}_{\theta}( \varphi_{\varepsilon}) \geq e^{\varphi_{\varepsilon}} (f+ (g- f)_+)dV \geq e^{\varphi_{\varepsilon}} g dV.        
\end{equation*} 
In other words, $\varphi_{\varepsilon}$ is a subsolution and $\psi$ is a supersolution to the equation ${\rm MA}_{\theta}(\phi)=e^{\phi}gdV$. The comparison principle (Proposition \ref{pro: comparison principle exponential})
insures that $\varphi_{\varepsilon}\leq \psi$, hence
\begin{eqnarray*}
        \varphi-\psi &= &\varphi_{\varepsilon}-\psi + \varepsilon (\varphi-\rho)  + C_2\varepsilon - n\log (1-\varepsilon)\\
        &\leq & (C_1 -\inf_X \f+ \sup_X \varphi - \inf_X \psi + 2n) \varepsilon\\
        &= & (C_1+{\rm osc}_X  \varphi - \inf_X \psi +2n) \exp{\left (\frac{\sup_X \varphi}{n}\right )} \|(g-f)_+\|_p^{1/n}.  
\end{eqnarray*}
Since $C_1$ only depends on $p,\theta,n$,  the result follows. 
\end{proof}

\subsection{The proof of Theorem A}
Without loss of generality we normalize $f,g$ and $\theta$ so that $\int_X fdV =\int_X gdV=\int_X \theta^n= \int_X dV$. Let $\phi$ be the unique bounded  $\theta$-plurisubharmonic function on $X$ normalized by $\sup_X \phi=0$ such that ${\rm MA}_{\theta}(\phi) =fdV$. The existence of $\phi$ follows from Ko{\l}odziej's celebrated work \cite{Kolodziej_1998Monge} and moreover, $-C\leq \phi \leq 0$, where $C$ is a uniform constant depending on $X,\theta, p, \|f\|_p$.  Then $\phi$ is a subsolution while $\phi+C$ is a supersolution to the Monge-Amp\`ere equation 
\[
{\rm MA}_{\theta}(u) = e^u fdV. 
\]
It thus follows from the comparison principle (Proposition \ref{pro: comparison principle exponential}) that 
\[
\phi\leq \varphi \leq \phi+C.
\] 
We thus obtain a uniform bound for $\varphi$. The same arguments give a uniform bound for $\psi$ as well.  
\ref{thm: main thm A} follows therefore from Theorem \ref{thm: thm Abis}.

\begin{remark} \label{rem:improve} In \ref{thm: main thm A}, one can replace the 
$L^p$-norm of $f - g$ by the $L^1$-norm, at the cost of decreasing the exponent $1 \slash n$ to $1 \slash (n + \e)$ ($\e >0$ arbitrarily small). 

Namely under the  assumptions of \ref{thm: main thm A}, for any $\e> 0$, there exists a constant $C > 0$ which depends on $p,n,X,\theta, \e$ and a uniform bound on $\Vert f \Vert_p$ and $\Vert g\Vert_p$ such that
\begin{equation*}
   \Vert \varphi- \psi \Vert_{\infty} \leq   C \|f-g\|_1^{1/(n + \e)}.  
\end{equation*}

Indeed repeating the proof with an exponent $1< r < p$ close to $1$,  we get a bound in terms of $\Vert f - g\Vert_r^{1 \slash n}$.
Then observe that if we write $ r = (1-t) + t p$, by H\"older-interpolation inequality applied to  $h := \vert f - g\vert \in L^p (X)$, we have
$$
 \int_X h^r d V \leq \left(\int_X h d V \right)^{1 - t} \left(\int_X h^p \right)^{t},
$$
which implies that 
$$
\Vert h \Vert_r \leq \Vert h\Vert_1^{(1 - t)\slash r} \Vert h\Vert_p^{t p \slash r}.
$$
Since  $t = (r - 1) \slash (p - 1)$ and $1 - t = (p - r) \slash (p - 1)$, we see that the exponent $(1 - t)\slash r = (p - r) \slash r (p - 1)$ is arbitrary close to $1$ as $r \to 1$.

 \end{remark}

\section{Stability in the parabolic case}

\subsection{A parabolic comparison principle}

We establish in this section a maximum principle which is classical when the data are smooth.
It has been obtained for continuous data in \cite{EGZ16}, we propose here a different approach which
applies to our present setting:
 
 \begin{theo} \label{thm:CP}
 Fix $\f : [0,T[ \times X \longrightarrow \R$ a subsolution, $\psi :[0,T[ \times X \longrightarrow \R$ a  supersolution to the parabolic equation \eqref{eq: PMAE}, where $0 \leq f \in L^p (X)$ with $p > 1$.  
 Then 
$$
 \sup_{X_T} (\f - \psi) \leq \sup_{X} (\f_0 - \psi_0)_+.
$$
 \end{theo}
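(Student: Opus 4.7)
The plan is to reduce the parabolic comparison to the elliptic comparison and domination principles (Propositions \ref{pro: comparison principle exponential} and \ref{pro: domination principle}) via a first-contact-time argument after a suitable perturbation. Set $M_0 := \sup_X (\f_0 - \p_0)_+$ and, for each $\eta > 0$, define $\tilde\p(t, x) := \p(t, x) + M_0 + \eta t$. Since $F$ is non-decreasing in its last variable and $M_0 + \eta t \geq 0$, a direct computation shows that $\tilde\p$ is a strict supersolution,
\[
(\omega_t + dd^c \tilde\p_t)^n \leq e^{\dot{\tilde\p}_t - \eta + F(t, x, \tilde\p_t)} f \, dV,
\]
and satisfies $\tilde\p(0, \cdot) \geq \f(0, \cdot)$ on $X$. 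It is therefore enough to prove $\f \leq \tilde\p$ on $X_T$ for every $\eta > 0$ and then let $\eta \downarrow 0$.

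Assume for contradiction that $\f > \tilde\p$ somewhere. By continuity and the initial inequality, $t^* := \sup\{t \in [0, T] : \f \leq \tilde\p \text{ on } [0, t] \times X\} \in (0, T)$, the contact set $K := \{x \in X : \f(t^*, x) = \tilde\p(t^*, x)\}$ is non-empty and compact, and $\f(t^*, \cdot) \leq \tilde\p(t^*, \cdot)$ on $X$. For each $x \in K$, the $\mc{C}^1$-in-$t$ map $(\f - \tilde\p)(\cdot, x)$ is non-positive on $[0, t^*]$ and vanishes at $t^*$, hence $\dot\f(t^*, x) \geq \dot{\tilde\p}(t^*, x)$. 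With $u := \f_{t^*}$, $v := \tilde\p_{t^*}$, and the continuous exponents $g(x) := \dot\f(t^*, x) + F(t^*, x, u(x))$, $h(x) := \dot{\tilde\p}(t^*, x) - \eta + F(t^*, x, v(x))$, the sub- and super-inequalities read $(\omega_{t^*} + dd^c u)^n \geq e^g f \, dV$ and $(\omega_{t^*} + dd^c v)^n \leq e^h f \, dV$, with $g - h \geq \eta$ on $K$ (since $u = v$ there). Continuity of $g - h$ provides an open neighborhood $U \supset K$ with $g - h > \eta/2$ on $U$, and since $v - u > 0$ on the compact set $X \sm U$ there exists $\sigma > 0$ with $\{v - u < \sigma\} \sub U$.

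For any $0 < s < \sigma$, the non-empty open set $\Om_s := \{u + s > v\}$ is contained in $U$, and the classical Bedford--Taylor comparison principle gives $\int_{\Om_s}(\omega_{t^*} + dd^c u)^n \leq \int_{\Om_s}(\omega_{t^*} + dd^c v)^n$. Combined with the sub- and super-inequalities this forces $\int_{\Om_s}(e^g - e^h) f \, dV \leq 0$; since $g - h > \eta/2$ on $\Om_s$ the integrand is bounded below by $(e^{\eta/2} - 1) e^h f \geq 0$, so $f \equiv 0$ almost everywhere on $\Om_s$, and in particular $(\omega_{t^*} + dd^c v)^n(\Om_s) = 0$. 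The domination principle (Proposition \ref{pro: domination principle}) applied to $D := \Om_s$, with boundary equality $v = u + s$ on $\partial \Om_s$, then yields $v \geq u + s$ on $\Om_s$, contradicting the very definition of $\Om_s$. The hard part is this passage from the local information $g - h > \eta/2$ on the neighborhood $U$ of the contact set to a global contradiction: one cannot invoke the elliptic exponential comparison directly, because the sign of $\dot\f - \dot{\tilde\p}$ is uncontrolled off $K$. The compactness argument that confines $\Om_s$ inside $U$, together with the Bedford--Taylor comparison and the domination principle, is the crucial ingredient that closes the gap.
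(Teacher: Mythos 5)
Your argument follows essentially the same strategy as the paper's: add a small affine-in-$t$ perturbation to the supersolution to gain strict positivity in the exponents, locate a critical time and contact set in $X$, promote the pointwise time-derivative inequality on the contact set to a neighborhood by continuity, and close via the elliptic domination principle (Proposition \ref{pro: domination principle}). The main structural difference is that the paper works with the maximum of $w := \f - \p - 2\e t$ over $[0,T']\times X$ (which forces $t_0>0$ as soon as $w(t_0,x_0)>0$, since $w(0,\cdot)\leq 0$), whereas you locate a first-crossing time $t^*$. Your intermediate deduction that $f\equiv 0$ a.e.\ on $\Om_s$ before invoking domination is a slightly different (and clean) route; the paper instead keeps ${\rm MA}(u)\geq e^{\e}{\rm MA}(v)$ on $D$ and translates $u$ by the boundary minimum. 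Your first-crossing formulation has a genuine advantage in that at $t^*$ one has $u\leq v$ on all of $X$, which you exploit to confine $\Om_s$ inside $U$ by compactness of $X\sm U$.

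There is, however, one unjustified step: the claim $t^*\in(0,T)$. With $\tilde\p = \p + M_0 + \eta t$ and $M_0 = \sup_X(\f_0-\p_0)_+$, one has only $\f_0 \leq \tilde\p_0$, and equality may occur on a nonempty contact set $K_0 := \{\f_0 = \tilde\p_0\}$. If $\dot\f(0,\cdot) > \dot\p(0,\cdot)+\eta$ somewhere on $K_0$, the set $\{t : \f\leq\tilde\p \text{ on } [0,t]\times X\}$ could reduce to $\{0\}$, forcing $t^*=0$, and then the key inequality $\dot\f(t^*,x)\geq\dot{\tilde\p}(t^*,x)$ for $x\in K$ — which you derive from non-positivity of $(\f-\tilde\p)(\cdot,x)$ on $[0,t^*]$ and vanishing at $t^*$ — is no longer available (the interval $[0,t^*]$ is a single point). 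The fix is elementary: replace $\tilde\p$ by $\tilde\p_\de := \p + M_0 + \de + \eta t$ for small $\de>0$, so that $\f_0 < \tilde\p_{\de,0}$ strictly on all of $X$, which by compactness and continuity gives $t^*>0$; run your argument to obtain $\f\leq\tilde\p_\de$ and then let $\de\downarrow 0$ before $\eta\downarrow 0$. With this repair the proof is complete.
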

 
 Recall that  $(\f_0 - \psi_0)_+ := \sup \{\f_0 - \psi_0 , 0\}$.

 \begin{proof} 
 Fix $T' < T$,  $0 < \varepsilon$.
 We first assume that $ M_0 := \sup_{X} (\f_0 - \psi_0) \leq 0 $  and we are going   prove that
  $\f \leq \psi+2\varepsilon t $ in $X_{T'}$.  
	
	Consider $w (t,x):=\f(t,x)-\psi(t,x)-2\varepsilon t$.
	This function is upper semi-continuous on the compact space $[0,T'] \times X$. Hence $w$ attains a maximum at some point $(t_0,x_0) \in [0,T'] \times X$. We claim that $w (t_0,x_0)\leq 0$. Assume by contradiction that $ w (t_0,x_0) >  0$,
	in particular  $t_0>0$, and set
	 $$
	 K := \{ x \in X ; w (t_0,x) = w (t_0,x_0)\}.
	 $$
 
The classical maximum principle insures	 that for all $x \in K$,
	\[
	\partial_t \f(t_0,x) \geq \partial_t \psi(t_0,x) +2\varepsilon.
	\]
By continuity of the partial derivatives in $(t,x)$, we can find an open neighborhood $D$ of $K$ such that for all $x\in D$
	\[
	\partial_t \f (t_0,x) > \partial_t \psi(t_0,x) +\varepsilon.
	\]
Set $u := \f (t_0,\cdot)$ and $v := \psi (t_0,\cdot)$. Since $\f$ is a subsolution  and  $\psi$ is a supersolution to (\ref{eq: PMAE})  we infer
	\[
	(\omega_{t_0} + dd^c u)^n \geq e^{F(t_0,x,u (x))-F(t_0,x,v (x))+\varepsilon}(\omega_{t_0} + dd^c v)^n,
	\] 
	in the weak sense of measures in $D$.  Recall that
\begin{itemize}
\item $u$ and $v$ are continuous on $D$,
\item   $F$ is non-decreasing in $r$,
\item $u (x) > v(x)+\varepsilon t_0$ for any $x \in K$. 
\end{itemize}
Shrinking $D$ if necessary,  we can assume that the latter inequality is true in $D$.
We thus get
	\[
	(\omega_{t_0} + dd^c u)^n \geq  e^{\varepsilon}(\omega_{t_0} + dd^c v)^n.
	\]
From this we see in particular that $D\neq X$, hence $\partial D\neq \emptyset$. 	
	
Consider now $\tilde{u}:=u +\min_{\partial D}(v -u)$. 
Since $v \geq \tilde{u}$ on $\partial D$, Proposition \ref{pro: domination principle} yields
	\[
	\int_{\{v<\tilde{u}\}\cap D}e^{\varepsilon} (\omega_{t_0} + dd^c v)^n \leq \int_{\{v<\tilde{u}\}\cap D} (\omega_{t_0} + dd^c u)^n \leq \int_{\{v<\tilde{u}\}\cap D} (\omega_{t_0} +dd^c v)^n. 
	\]
	
It then follows that $\tilde{u} \leq v$, almost everywhere in $D$ with respect to  the measure  $(\omega_{t_0} + dd^c v)^n$, hence everywhere in $D$ by the domination principle (see Proposition \ref{pro: domination principle}).  
In particular for all $x \in D$,
	\begin{equation}
		\label{eq: varphi r less than psi}
		u (x)- v(x) +\min_{\partial D}(v - u)=\tilde{u}(x) -v (x) \leq 0, 
	\end{equation}
Since $K\cap \partial D =\emptyset$, we infer $w(t_0,x) < w(t_0,x_0)$, for all $x\in \partial D$, i.e.
	$$
	 u (x) - v (x) < u (x_0) - v (x_0)
	 \text{ for all }
	 x\in \partial D ,
	 $$
	   contradicting \eqref{eq: varphi r less than psi}. 
Altogether this shows that $t_0 = 0$, thus $\varphi \leq \psi+2\varepsilon t$ in $X_{T'}$. Letting $\varepsilon\to 0$ and $T' \to T$ we obtain that $\varphi \leq \psi$ in $X_{T}$.
	
We finally get rid of the assumption $\f_0 \leq \p_0$.
If $M_0 := \sup_X (\f_0 - \psi_0) > 0$ then $\f - M_0$ is a subsolution of the same equation since $F$ is non decreasing in the last variable. Hence  $\f - M_0 \leq \psi$ in $X_{T}$.
This proves the required inequality.
 \end{proof}

 \begin{remark}
 We note for later works that the above proof only requires $t \mapsto \f(t,\cdot),\p(t,\cdot)$ to be ${\mathcal C}^1$
 in $]0,T[ \times X$. This should be useful in analyzing the smoothing properties of complex Monge-Amp\`ere flows
 at time zero. 
 \end{remark}

 \subsection{A parabolic semi-stability theorem}
 
We now establish a technical comparison principle which is a key step in the proof of \ref{thm: main thm B}. The proof of this result does not require the smoothness assumption on $F,G,f,g$. 
We assume in this subsection that 
 \begin{itemize}
\item  $ F, G : \Hat{ X}_T := [0,T[ \times X \times \R \rightarrow \R $ are continuous;
\item $F,G$ are non decreasing in the last variable;
 \item $F,G$ satisfy   condition (\ref{eq:Lip}) with the same constant $L > 0$.
 \item $0 \leq f, g \in L^p (X)$  with $p > 1$.
 \end{itemize}

  \begin{thm} 
  Assume that $\f : [0,T[ \times X \longrightarrow \R$ is a subsolution to the parabolic equation \eqref{eq: PMAE} with data $(F,f)$ and $\psi :[0,T[ \times X \longrightarrow \R$ is a supersolution to the parabolic equation \eqref{eq: PMAE} with data $(G,g)$.
 Then 
 \begin{equation*}
 \sup_{X_T} (\f - \psi) \leq \sup_{X} (\f_0 - \psi_0)_+ + T \, \sup_{\Hat{X}_T} (G-F)_+  + A \, \Vert (g - f)_+ \Vert_p^{1 \slash n},
 \end{equation*}
 where $A>0$ depends on $X, \theta, n,  p$ and a uniform bound on $\dot \f $,  $ \f ,\psi$, and $\sup_{X_T} G(t,x, \sup_{X_T} \varphi)$. 
 \end{thm}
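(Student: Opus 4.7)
The plan is to transpose the perturbation trick of Theorem~\ref{thm: thm Abis} to the parabolic setting. Starting from the subsolution $\varphi$ of the $(F,f)$-equation, I will produce a new subsolution $\tilde\varphi$ of the $(G,g)$-equation whose distance to $\varphi$ is controlled by $\|(g-f)_+\|_p^{1/n}$, and then apply the parabolic comparison principle (Theorem~\ref{thm:CP}) to the pair $(\tilde\varphi,\psi)$.

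Set $\mu := \|(g-f)_+\|_p$, $M := \sup_{X_T}|\dot\varphi| + \sup_{X_T} G(t,x,\sup_{X_T}\varphi)$, and $\varepsilon := e^{M/n}\mu^{1/n}$. The regime $\varepsilon \geq 1/2$ is trivial from the uniform bounds on $\varphi$ and $\psi$ after enlarging $A$, so I focus on $\varepsilon < 1/2$. Ko{\l}odziej's theorem produces a bounded (indeed continuous) $\theta$-psh function $\rho$ with $\sup_X\rho = 0$, $-C_1 \leq \rho \leq 0$ (where $C_1$ depends only on $p,n,X,\theta$), and $\mathrm{MA}_\theta(\rho) = (a + (g-f)_+/\mu)\,dV$. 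I then define
\[
\tilde\varphi(t,x) := (1-\varepsilon)\varphi(t,x) + \varepsilon\rho(x) - C_2\varepsilon + n\log(1-\varepsilon) - \mu_1 t,
\]
with $C_2 := -\inf_{X_T}\varphi$ and $\mu_1 := \sup_{\hat X_T}(G-F)_+ + (M+2n)\varepsilon$. The choice of $C_2$ forces $\tilde\varphi \leq \varphi$ on $X_T$ and, in particular, $\tilde\varphi_0 \leq \varphi_0$ on $X$. The explicit formula guarantees the continuity and $\mathcal{C}^1_t$ regularity required to apply Theorem~\ref{thm:CP}.

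The core step is to verify that $\tilde\varphi$ is a subsolution of the $(G,g)$-equation. From $\omega_t + dd^c\tilde\varphi_t = (1-\varepsilon)(\omega_t + dd^c\varphi_t) + \varepsilon(\omega_t + dd^c\rho)$, the concavity of $A \mapsto (\det A)^{1/n}$ on positive Hermitian matrices, and the pointwise bound $\mathrm{MA}_{\omega_t}(\rho) \geq \mathrm{MA}_\theta(\rho)$ (which follows from $\omega_t \geq \theta$ via binomial expansion), one obtains
\[
\mathrm{MA}_{\omega_t}(\tilde\varphi_t) \geq e^{\dot\varphi + F(t,x,\varphi) + n\log(1-\varepsilon)}\, f\,dV + e^{M}(g-f)_+\,dV.
\]
Splitting $g\,dV \leq (f + (g-f)_+)\,dV$ on the target side, the subsolution inequality reduces to two pointwise estimates. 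The $f$-part, $\varepsilon\dot\varphi + n\log(1-\varepsilon) + \mu_1 \geq G(t,x,\tilde\varphi) - F(t,x,\varphi)$, holds via monotonicity of $G$, the inequality $\tilde\varphi \leq \varphi$, the bound $-n\log(1-\varepsilon) \leq 2n\varepsilon$, and the definition of $\mu_1$. The $(g-f)_+$-part, $M \geq (1-\varepsilon)\dot\varphi - \mu_1 + G(t,x,\tilde\varphi)$, follows from $\mu_1 \geq 0$, $\tilde\varphi \leq \varphi$, monotonicity of $G$, and the definition of $M$.

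Equipped with the subsolution property, Theorem~\ref{thm:CP} yields $\sup_{X_T}(\tilde\varphi - \psi) \leq \sup_X(\tilde\varphi_0 - \psi_0)_+ \leq \sup_X(\varphi_0-\psi_0)_+$. Since $\varphi - \tilde\varphi = \varepsilon(\varphi - \rho + C_2) - n\log(1-\varepsilon) + \mu_1 t$, controlling each summand via the uniform estimates on $\varphi$ and expanding $\mu_1 T = T\sup_{\hat X_T}(G-F)_+ + T(M+2n)\varepsilon$ produces the claimed inequality, with $A := e^{M/n}\bigl[\mathrm{osc}_{X_T}\varphi + C_1 + 2n + T(M+2n)\bigr]$ further enlarged to absorb the trivial regime $\varepsilon \geq 1/2$ (which is precisely where the dependence on $\sup|\psi|$ enters). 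The main subtlety I anticipate is the order-of-quantifiers bookkeeping: $M$ must be fixed a priori from the uniform bounds on $\varphi$, after which $\varepsilon$ and $\mu_1$ are derived from $M$ without circular dependence.
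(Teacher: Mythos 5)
Your proof follows the same strategy as the paper's: perturb $\f$ by an $\varepsilon$-multiple of a Ko{\l}odziej potential $\rho$ to manufacture a subsolution of the $(G,g)$-equation, then invoke the parabolic comparison principle (Theorem~\ref{thm:CP}). Where you genuinely diverge is in the extra shift $-C_2\varepsilon$ with $C_2=-\inf_{X_T}\f$, which forces $\tilde\f\leq\f$ pointwise on all of $X_T$. This lets you bound $G(t,x,\tilde\f)$ by $G(t,x,\f)$ using only monotonicity, whereas the paper's perturbation is not pointwise below $\f$ and therefore the paper must invoke the Lipschitz hypothesis \eqref{eq:Lip}, inserting an additional coefficient $B=Lm_0-m_1+2n\log 2$ in the $-B\delta t$ term. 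Your route is a mild simplification and even removes the implicit dependence of the constant $A$ on the Lipschitz constant $L$ that the paper's proof incurs.

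However, there is a gap in your bookkeeping. With $M:=\sup_{X_T}|\dot\f|+N$ (where $N:=\sup_{X_T}G(t,x,\sup_{X_T}\f)$), $\varepsilon:=e^{M/n}\mu^{1/n}$ and $\mu_1:=\sup_{\Hat X_T}(G-F)_++(M+2n)\varepsilon$, your $f$-part inequality reduces, after applying monotonicity of $G$ and $n\log(1-\varepsilon)\geq -2n\varepsilon$, to $\varepsilon(\dot\f+M)\geq 0$, i.e.\ $M\geq -\dot\f$. Since $M=\sup|\dot\f|+N$, this is guaranteed only when $N\geq 0$, and nothing prevents $N<0$. A knock-on problem: you also need $\mu_1\geq 0$ to get $\tilde\f\leq\f$ on $X_T$, and this too can fail if $M<-2n$ and $\sup(G-F)_+=0$. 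The cleanest repair is to decouple the $\varepsilon$-coefficient in $\mu_1$ from $M$: set $\mu_1:=\sup_{\Hat X_T}(G-F)_++\bigl(\sup_{X_T}|\dot\f|+2n\bigr)\varepsilon$. Then $\mu_1\geq 0$ unconditionally; the $f$-part reduces to $\varepsilon(\dot\f+\sup|\dot\f|)\geq 0$, which holds; and the $(g-f)_+$-part is unaffected, since one still has $\dot{\tilde\f}+G(t,x,\tilde\f)\leq\sup|\dot\f|+N=M$ with $\varepsilon^n/\mu=e^M$. With that single change the argument goes through, and the resulting constant $A$ is Lipschitz-free as claimed.
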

 
\begin{remark}
	In the second term of the estimate above  one can replace $\sup_{\Hat{X}_T} (G-F)_+$ by 
	\[
	\sup_{[0,T[ \times X \times I} (G-F)_+,
	\]
	where $I= [\inf_{X_T} \varphi, \sup_{X_T} \varphi]$ is a compact interval in $\mathbb{R}$.
\end{remark}
 
 \begin{proof}  
 We first assume that $\Vert (g - f)_+ \Vert_p > 0$.
  Since  $\int_X d V = \int_X \theta^n = 1$, it follows from \cite{Kolodziej_1998Monge}
that  there exists  $\rho \in {\rm PSH} (X,\theta) \cap \mathcal{C}^0(X)$ such that
 \begin{equation} \label{eq:yaubis}
 (\theta + dd^c \rho)^n = \left(a +  \frac{(g - f)_+}{\Vert (g - f)_+ \Vert_p}\right) d V
 \end{equation}
normalized by  $\max_X \rho = 0$, where $a \geq 0$ is a normalizing constant given by
 $$
 a := 1 - \frac{\Vert (g - f)_+\Vert_1}{\Vert (g - f)_+ \Vert_p} \in [0,1].
 $$
 
 We moreover have a uniform bound on $\rho$  which only depends on the $L^p$ norm of the density
 of $(\theta + dd^c \rho)^n$ which is here bounded from above by $2$,
 \begin{equation} \label{eq:est0}
 \Vert \rho \Vert_{\infty} \leq C_0 (a + 1) \leq 2 C_0,
 \end{equation}
 where $C_0 > 0$ is a uniform constant depending only on $(X,\theta, p)$.
 
 Fix  $B, M > 0$.
For $0 < \delta < 1 $ and $(t,x) \in X_T$ we set
 $$
 \f_\delta (t,x) := (1 - \delta) \f (t,x) + \delta  \rho + n \log (1 - \delta) - B \delta t - M t.
 $$
 The plan is to choose $B,M>0$ in such a way that $\f_{\delta}$ be a subsolution for 
 the parabolic equation \eqref{eq: PMAE} with data $(G,g)$. The conclusion will then follow
 from the comparison principle (Theorem \ref{thm:CP}).

 Observe that for $t \in ]0,T[$ fixed, $\f_\delta(t,\cdot) $ is $\omega_t$-plurisubharmonic in $X$ and 
 $$
 (\omega_t + dd^c \f_{\delta} (t,\cdot))^n \geq (1 - \delta)^n (\omega_t + dd^c \f_t)^n + \delta^n (\theta + dd^c \rho)^n.
 $$
Using that $\f$ is a subsolution to  (\ref{eq: PMAE}) with density $f$, we infer
 \begin{equation} \label{eq:subest}
 (\omega_t + dd^c \f_{\delta} (t,\cdot))^n \geq   e^{\dot \f + F (t,\cdot,\f) + n \log (1- \delta)} f d V + \delta^n \frac{(g - f)_+}{\Vert (g - f)_+ \Vert_p} d V.
 \end{equation}

Set $m_0 := \inf_{X_T} \f$, $m_1 := \inf_{X_T} \dot{\f}$ and choose $M:= \sup_{\Hat{X}_T} (G-F)_+$.
Noting that $\f \geq \f_{\delta} + \delta \f$ and recalling that $G$ is non decreasing in the last variable, we obtain
 \begin{eqnarray*}
&& \dot{\f} (t,x)  + F (t,x ,\f (t,x)) + n \log (1-\delta) \\
&\geq & \dot{\f}_\delta (t,x) + \delta \dot{\f} (t,x) + G \left(t,x, \f (t,x)\right) -M +
  n \log (1-\delta) + B \delta +M \\
 &\geq & \dot{\f}_\delta (t,x) + \delta \dot{\f} (t,x) + G \left(t,x,\f_\delta (t,x) + \delta \f (t,x)\right) +
  n \log (1-\delta) + B \delta \\
 & \geq & \dot{\f}_\delta (t,x) + \delta m_1 + G \left(t,x,\f_\delta (t,x) + \delta m_0 \right) + n \log (1-\delta) + B \delta. 
 \end{eqnarray*}
The Lipschitz condition (\ref{eq:Lip}) yields
 \begin{eqnarray*}
&&\dot{\f} (t,x)  + F (t,x ,\f (t,x)) + n \log (1-\delta) \\
&\geq & \dot{\f}_\delta  (t,x) + G (t,x,\f_\delta (t,x))
+ B \delta - L \delta m_0 + \delta m_1 + n \log (1-\delta).
 \end{eqnarray*}
Using the elementary inequality $\log (1 - \delta) \geq - 2 (\log 2 ) \delta$ for  $0 < \delta \leq 1 \slash 2$, 
it follows that for $0 < \delta \leq 1 \slash 2$,
 $$
 B \delta - L \delta m_0  + \delta m_1 + n \log (1-\delta) \geq  (B  - L  m_0 +  m_1 - 2 n \log 2) \delta.
 $$
  
\smallskip  
  
We now choose $B :=  L  m_0  - m_1 +  2 n \log 2$ so that
  $$
  \dot{\f} (t,x)  + F (t,x ,\f (t,x)) + n \log (1-\delta) \geq  \dot{\f}_\delta  (t,x) + G (t,x,\f_\delta (t,x)),
  $$
   which, together with  (\ref{eq:subest}),  yields
\begin{equation}  \label{eq: subest2} 
 (\omega_t + dd^c \f_{\delta} (t,\cdot))^n \geq   e^{\dot{\f}_\delta  (t,\cdot) + G (t,\cdot,\f_\delta (t,\cdot))} f d V + \delta^n \frac{(g - f)_+}{\Vert (g - f)_+ \Vert_p}.
 \end{equation} 
On the other hand,  if we set 
$$
M_1 := \sup_{X_T}  \dot\f , \;
M_0 := \sup_{X_T} \f
\text{ and }
N := \sup_{X_T} G (t,x,M_0),
$$
then the properties of $G$ insure
\begin{eqnarray*}
\dot{\f}_\delta (t,x) + G (t,x,\f_\delta (t,x)) & \leq & (1- \delta) \sup_{X_T} \dot{\f} + \sup_{X_T} G (t,x, (1-\delta) \f (t,x))  \\
& \leq & (1- \delta) M_1 + \sup_{X_T} G (t,x, (1-\delta)M_0) \\
& \leq & (1- \delta) M_1 +  M_0 L \delta + N \leq N + \max \{L M_0, M_1\}
\end{eqnarray*}
Using  \eqref{eq: subest2} we conclude that for $0 < \delta < 1 \slash 2$,
\begin{equation} \label{eq:subest3}
(\omega_t + dd^c \f_{\delta} (t,\cdot))^n \geq   e^{\dot{\f}_\delta  (t,\cdot) + G (t,\cdot,\f_\delta (t,\cdot))} \left(f  + \delta^n e^{ - M_2}\frac{(g- f)_+}{\Vert (g - f)_+ \Vert_p}\right) d V,
\end{equation}
 where $M_2 :=  N + \max \{L M_0, M_1\}$.

To conclude that $\f_{\delta}$ is a subsolution, we finally set
 \begin{equation} \label{eq:est1}
 \delta  := \Vert (g - f)_+\Vert^{1 \slash n}_p e^{ M_2 \slash n}\cdot
 \end{equation}
 Assume first that $\Vert (g - f)_+\Vert_p \leq 2^{- n} e^{ - M_2}$  so that  $\delta \leq 1 \slash 2$. 
 It follows from (\ref{eq:subest3}) that
 \begin{eqnarray*}
 (\omega_t + dd^c \f_{\delta} (t,\cdot))^n & \geq &   e^{\dot{\f}_\delta  (t,\cdot) + G (t,x,\f_\delta (t,\cdot))} (f  +  (g - f)_+) d V \\
 & \geq & e^{\dot{\f}_\delta  (t,\cdot) + G (t,x,\f_\delta (t,\cdot))} g d V,
 \end{eqnarray*}
 hence  $\f_\delta$ is a subsolution to   \eqref{eq: PMAE} for the data   $(G,g)$ in $D$.
 The comparison principle (Theorem \ref{thm:CP}) insures that for all $(t,x) \in X_T$,
 $$
 \f_{\delta} (t,x) -\psi (t,x) \leq \max_X (\f_\delta (0,\cdot) - \psi (0,\cdot)_+).
 $$
Taking into account the estimates (\ref{eq:est0}) and (\ref{eq:est1}),  we get
\begin{equation*} 
\f (t,x)- \psi (t,x) \leq \max_X (\f (0,\cdot) - \psi (0,\cdot)_+ + T M +  A_1
 \Vert (g - f)_+\Vert^{1 \slash n}_p, 
\end{equation*}
where 
$$ 
A_1 :=  (M_0 +   2 C_0 + 2 n \log 2 + B  T) e^{M_2 \slash n}\cdot
$$ 
 
 When $\Vert (g - f)_+\Vert_p >  2^{- n} e^{- M_2}$, we can choose a constant $A_2 > 0$ so that  
 $$
 \f (t,x)- \psi (t,x) \leq \max_X (\f _0 - \psi_0)_+ + A_2 2^{- n} e^{- M_2}.
 $$ 
We eventually take $A = \max \{A_1,A_2\}$.

\smallskip
 
Assume finally that $\Vert (g-f)_+\Vert_p = 0$ which means that $g \leq f$ almost everywhere in $X$.
 In this case we solve the
 equation (\ref{eq:yaubis}) with the right hand side equal to $dV$ and repeat the same arguments with an arbitrary $\delta > 0$. 
The conclusion  follows by letting $\delta \to 0$.
 \end{proof}

 \subsection{Proof of Theorem B}

 The proof of Theorem B goes by symmetrizing the roles of $\f$ and $\p$, and establishing uniform bounds on
 $\f,\p,\dot{\f},\dot{\p}$  depending on uniform bounds 
 for $\f_0,\dot{\f_0},\p_0,\dot{\p_0}$ and  $||f||_p,||g||_p$. 
 
 \subsubsection{Bounds on $\f$}
 By assumption, we can fix $a, A>0$ such that 
 $$a \theta \leq \omega_t \leq A \theta , \ \forall (t,x) \in X_T.$$
 
Let $\rho$  be  the unique bounded normalized $\theta$-psh function $\sup_X \rho =0$ such that ${\rm MA}_{\theta}(\rho)=C_0fdV$, where $C_0>0$ is a uniform normalization constant.  Then $\rho$ is bounded by a constant depending only on the $L^p$ norm of $f$ and on $\theta, p$.   
	We set 
	\[
	C_1 := \sup_X (a\rho -\varphi_0) \ {\rm and} \  C_2:= \sup_X (\varphi_0-A\rho). 
	\]
	We next introduce the following uniform constants
	\[
	C_3 := \sup_{(t,x)\in [0,T] \times X}  F(t,x, \varphi_0(t,x))  \ {\rm and}\  C_4 := \inf_{(t,x) \in [0,T] \times X}  F(t,x, \varphi_0(t,x)). 
	\]
	A direct computation shows that the function defined on $X_T$ by
	\[
	u(t,x) := a\rho - C_1 -\max(C_3 - n \log a -\log  C_0,0)t
	\]
	is a subsolution to the parabolic equation \eqref{eq: PMAE} with data $(f,F)$. Indeed, for fixed $t \in ]0,T[$, the Monge-Amp\`ere measure of $u_t$ can be estimated as 
	\[
	(\omega_t + dd^c u_t)^n \geq a^n (\theta +dd^c \rho)^n = a^n C_0 fdV. 
	\]
	Since $F$ is non-decreasing in the last variable and $\varphi_0 \geq u_t$ 
	we obtain $C_3 \geq F(t,x,\varphi_0) \geq F(t,x,u_t)$. Thus
	\begin{eqnarray*}
		e^{\dot{u}_t + F(t,\cdot,u_t)} fdV &\leq & e^{-\max(C_3 - n \log a -\log  C_0,0) + C_3} fdV  \\
		&\leq & a^n C_0 fdV \leq (\omega_t+dd^c u_t)^n.
	\end{eqnarray*}
	
	 A similar computation shows that  the function defined on $X_T$ by
	\[
	v (t,x) := A \rho + C_2  + \max(-C_4 + n \log A + \log C_0,0)t
	\]
	is a supersolution to the parabolic equation 
	\begin{equation}
		\label{eq: PCMA A theta}
		(A\theta +dd^c v_t)^n = e^{\dot{v}_t +F(t,x,v_t)} fdV.
	\end{equation}
	Since $A\theta \geq \omega_t$ we see that $\varphi_t$ is a subsolution to the equation \eqref{eq: PCMA A theta}. 
The parabolic comparison principle (Theorem \ref{thm:CP}) therefore yields  
	\[
	u \leq \varphi \leq v.,
	\] 
	in $X_T$.
 
 \subsubsection{Bounds on $\dot{\f}$}
 
 We now provide a uniform bound on $\dot{\f}_t$, assuming all data are smooth.
 We only outline the proof since the arguments are classical.  
 The previous subsection has provided a uniform bound
 \[
 -B_0 \leq \varphi(t,x) \leq B_0,  \ \forall (t,x) \in [0,T[ \times X. 
 \]
 Since $\omega_t$ is smooth in $t$ and $\omega_t$ is uniformly K\"ahler we can fix a positive constant $B_1$ such that 
 \[
 -B_1 \omega_t \leq \dot{\omega}_t \leq B_1 \omega_t. 
 \] 
  Up to enlarging $B_1$ we can further assume that 
 \[
 -B_1 \leq \partial_r F(t,x,r) \leq B_1, \ \forall (t,x,r) \in [0,T[ \times X \times [-B_0,B_0]. 
 \]
 Set
 $$
   \Delta_t(h)=\Delta_{\omega_t +dd^c \f_t} (h)=
   n \frac{dd^c h \wedge (\omega_t +dd^c \f_t)^{n-1} }{ (\omega_t +dd^c \f_t)^{n}},
 $$
 and 
 $$
 {\rm Tr}_t(\eta):=n \frac{\eta \wedge (\omega_t +dd^c \f_t)^{n-1} }{ (\omega_t +dd^c \f_t)^{n}}\cdot
 $$
 A straightforward computation yields
 \begin{eqnarray*}
  \ddot{\f_t} &= &
  \Delta_t (\dot{\f_t})+ {\rm Tr}_t(\dot{\omega_t})-\partial_t F(t,x,\f_t)  -\dot{\f_t} \partial_rF(t,x,\f_t) \\
  &\leq & 
  \Delta_t (\dot{\f_t})-B_1 \Delta_t(\f_t) +C -\dot{\f_t} \partial_rF(t,x,\f_t),
 \end{eqnarray*}
using that $\f_t$, hence $\partial_t F(t,x,\f_t)$, is uniformly bounded on $X_T$, and that
$$
{\rm Tr}_t(\dot{\omega_t}) \leq B_1 {\rm Tr}_t(\omega_t)=B_1n-B_1 \Delta_t(\f_t).
$$

 Consider
 $$
 H(t,x):=\dot{\f_t}(x)-B_1 \f_t(x)-(C+1)t.
 $$
It follows from the above computation that
$$
\left( \frac{\partial}{\partial t}-\Delta_t \right) H \leq -1-[B_1+\partial_rF(t,x,\f_t)] \dot{\f_t} .
$$
If $H$ reaches its maximum at time zero, then 
 $$
 H \leq \sup_X \dot{\f_0}-B_1 \inf_X \f_0,
 $$
 hence $\dot{\f_t} \leq C(T,B_0,B_1)+\sup_X  \dot{\f_0}$. If $H$ reaches its maximum at $(t_0,x_0)$ with $t_0>0$, we obtain at $(t_0,x_0)$,
 $$
 0 \leq \left( \frac{\partial}{\partial t}-\Delta_t \right) H \leq -1-[B_1+\partial_rF(t,x,\f_t)] \dot{\f_t},
 $$
 hence $\dot{\f_t}(t_0,x_0) \leq 0$ since $[B_1+\partial_rF(t,x,\f_t)] \geq 0$. Therefore
 $$
 H_{max} =H(t_0,x_0) \leq -B_1 \inf_X \f_{t_0} \leq B_0 B_1,
 $$
 and we obtain an appropriate bound from above for $\dot{\f_t}$.
 
 \smallskip
 
 The proof for the lower bound goes along similar lines, considering 
 $G=\dot{\f_t}(x)+B_1 \f_t(x)+(C'+1)t$.

\section{Concluding remarks}

 \subsection{Varying the reference forms}

It is certainly interesting to  study the stability properties when the reference forms $\theta,\omega_t$ are varying.
For simplicty we address this issue here only in the elliptic case.

\begin{theorem} 
Fix $\theta,\omega$ K\"ahler forms.
Fix $p>1$ and $0\leq f,g\in L^p(X,dV)$. If  $\varphi$ (resp. $\p$) is a  bounded $\theta$-plurisubharmonic  
(resp. $\omega$-plurisubharmonic) function on $X$  such that 
        \[
        {\rm MA}_{\theta}(\varphi) =  e^{\varphi} fdV 
        \text{ and }
        \   {\rm MA}_{\omega}(\psi) = e^{\psi} gdV,
        \]
        then
        \[
        \Vert \varphi- \psi \Vert_{\infty} \leq   C \left\{ \|f-g\|_p^{1/n}+d(\omega,\theta) \right\},
        \]
    where $C>0$  depends on $p,n,X,\omega,dV$ and uniform bounds on   $\Vert f \Vert_p, \Vert g \Vert_p$.
\end{theorem}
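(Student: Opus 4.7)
The strategy is to reduce to a common reference form and then invoke \ref{thm: main thm A}. Assuming $[\omega]=[\theta]$ in $H^{1,1}(X,\mathbb{R})$, the $dd^c$-lemma provides a smooth function $\eta$ with $\omega=\theta+dd^c\eta$, normalized for instance by $\sup_X\eta=0$; the natural choice of distance is then $d(\omega,\theta):=\|\eta\|_{\mathcal{C}^0(X)}$. The non-cohomologous case will be discussed at the end.

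First I would observe that $\tilde\psi:=\psi+\eta$ is $\theta$-psh, since $\theta+dd^c\tilde\psi=\omega+dd^c\psi\geq 0$, and transforms the Monge-Amp\`ere equation for $\psi$ into
\[
\mathrm{MA}_\theta(\tilde\psi)=\mathrm{MA}_\omega(\psi)=e^\psi g\,dV=e^{\tilde\psi}\tilde g\,dV,\qquad \tilde g:=e^{-\eta}g.
\]
By the a priori $L^\infty$ estimate used in the proof of \ref{thm: main thm A} (itself a consequence of Ko\l odziej's uniform bound \cite{Kolodziej_1998Monge}), both $\varphi$ and $\tilde\psi$ are uniformly bounded in terms of $p,n,X,\theta$ and $\|f\|_p$, $\|\tilde g\|_p\leq e^{\|\eta\|_\infty}\|g\|_p$.

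Next I would apply \ref{thm: main thm A} to the pair $(\varphi,\tilde\psi)$ with densities $(f,\tilde g)$ to obtain
\[
\|\varphi-\tilde\psi\|_\infty\leq C\,\|f-\tilde g\|_p^{1/n}.
\]
The elementary inequality $|1-e^{-\eta}|\leq |\eta|\,e^{|\eta|}$ together with the uniform bound on $\eta$ yields
\[
\|f-\tilde g\|_p\leq\|f-g\|_p+\|(1-e^{-\eta})g\|_p\leq\|f-g\|_p+C'\|\eta\|_\infty\|g\|_p,
\]
so that, subadditivity of $t\mapsto t^{1/n}$ combined with $\|\varphi-\psi\|_\infty\leq\|\varphi-\tilde\psi\|_\infty+\|\eta\|_\infty$ produces the claimed bound (absorbing $\|\eta\|_\infty$ and $\|\eta\|_\infty^{1/n}$ into a single $d(\omega,\theta)$ term using that $d(\omega,\theta)$ is bounded).

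The main obstacle is the case $[\omega]\neq[\theta]$, where no global potential $\eta$ for $\omega-\theta$ exists. A natural substitute is to fix a Kähler form $\Theta$ dominating both $\omega$ and $\theta$, view $\varphi,\psi$ as $\Theta$-psh, and expand
\[
\mathrm{MA}_\Theta(u)=\sum_{k=0}^n\binom{n}{k}(\Theta-\theta)^k\wedge(\theta+dd^c u)^{n-k},
\]
absorbing the excess mixed terms into a perturbed right-hand side whose $L^p$-norm differs from $\|f\|_p$ by $O(\|\Theta-\theta\|_{\mathcal{C}^0})$; proceeding likewise for $\psi$ reduces the problem to the same-class situation, and the quantity $\|\Theta-\theta\|_{\mathcal{C}^0}+\|\Theta-\omega\|_{\mathcal{C}^0}$ becomes the natural candidate for $d(\omega,\theta)$. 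The delicate point here is that the mixed terms are no longer Monge-Amp\`ere measures of $\theta$-psh functions, so some care is needed to keep the comparison/domination principles available; this is where the argument requires the most attention.
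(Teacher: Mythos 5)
There is a genuine gap. Your argument is organized around the $dd^c$-lemma and the potential $\eta$ with $\omega=\theta+dd^c\eta$, which only exists when $[\omega]=[\theta]$; the theorem, however, makes no cohomological assumption on $\theta,\omega$. Moreover, the distance $d(\omega,\theta)$ used in the statement is the \emph{pointwise multiplicative} gap
\[
d(\omega,\theta)=\inf\{t>0 \ : \ e^{-t}\omega\leq\theta\leq e^{t}\omega\},
\]
which is not the same quantity as $\|\eta\|_{\mathcal C^0}$, so even in the cohomologous case you are proving an estimate in terms of a different (and, for the purposes of this bound, coarser) distance. There is also a quantitative loss in your route: after absorbing $f-\tilde g$ into $\|f-g\|_p+C\|\eta\|_\infty\|g\|_p$ and applying the subadditivity of $t\mapsto t^{1/n}$, the $\eta$-contribution appears as $\|\eta\|_\infty^{1/n}$, not linearly, whereas the theorem's conclusion is linear in $d(\omega,\theta)$. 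Finally, your sketch for the non-cohomologous case does not go through as written: in the binomial expansion of $(\Theta+dd^cu)^n$ the mixed terms $(\Theta-\theta)^k\wedge(\theta+dd^cu)^{n-k}$ are positive measures that need not be absolutely continuous with any controlled $L^p$ density, so they cannot be "absorbed into a perturbed right-hand side" with an $L^p$ bound, and the comparison principle machinery is no longer available for that perturbed equation.

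The paper's proof bypasses all of this and requires neither cohomological compatibility nor a potential. Setting $c:=\inf\{t>0 : (1-t)\omega\leq\theta\leq(1+t)\omega\}$ (so $c\simeq d(\omega,\theta)$ when these are small, and the estimate is trivial otherwise), one rescales
\[
\psi_c:=(1-c)\psi+n\log(1-c)+c\inf_X\psi,
\]
which is $\theta$-psh because $\theta+dd^c\psi_c\geq(1-c)(\omega+dd^c\psi)\geq 0$, and satisfies $\mathrm{MA}_\theta(\psi_c)\geq(1-c)^n e^\psi g\,dV\geq e^{\psi_c}g\,dV$ since $\psi_c\leq\psi+n\log(1-c)$. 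Thus $\psi_c$ is a subsolution of $\mathrm{MA}_\theta(u)=e^ug\,dV$ and Theorem \ref{thm: thm Abis} gives $\psi_c\leq\varphi+C\|f-g\|_p^{1/n}$. Unwinding the definition of $\psi_c$, the correction $\psi-\psi_c$ is $O(c)$ (using the uniform bound on $\mathrm{osc}_X\psi$), whence $\psi\leq\varphi+C'\bigl(\|f-g\|_p^{1/n}+c\bigr)$ with \emph{linear} dependence on $c$; exchanging the roles of $\varphi,\psi$ finishes the proof. The essential idea you are missing is that the multiplicative rescaling $(1-c)\psi$ commutes cleanly with the Monge-Amp\`ere operator and produces a $\theta$-psh competitor without ever invoking a global potential or the cohomology class.
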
 

We use here the following distance on positive forms,
    \[
    d(\omega,\theta):= \inf \{ t>0 \  ; \  e^{-t}\omega \leq \theta \leq e^t \omega\}. 
    \]

\begin{proof}
Set
$$
c=\inf \{ t>0 \  ; \  (1-t) \omega \leq \theta \leq (1+t)\omega\}. 
$$
Adjusting the constant we can assume that $c\leq 1/2$ and $c \simeq d(\omega,\theta)$. Now 
\[
\psi_c := (1-c) \psi + n \log (1-c) + c\inf_X \psi
\]
is a $\theta$-psh function whose Monge-Amp\`ere measure can be estimated as 
\[
{\rm MA}_{\theta}(\psi_{c}) \geq (1-c)^n e^{\psi} gdV\geq e^{\psi_c} gdV. 
\]
It thus follows from Theorem \ref{thm: thm Abis} that 
\[
\psi_{c} \leq \varphi + C \|f-g\|_p^{1/n},
\]
for a uniform constant $C$. Note that the uniform norm of $\psi_c$ is uniformly controlled by $\|\psi\|$ because $c\leq 1/2$.  From this and the definition of $\psi_c$ we obtain 
\[
\psi \leq \varphi + C' (\|f-g\|_p^n + c),
\]
where $C'$ is a uniform constant. Exchanging the roles of $\varphi$ and $\psi$ yields the conclusion. 
\end{proof}

\subsection{Big classes}

The ideas we have developed so far can also be applied to the more general setting of   cohomology classes 
that are merely big  rather than K\"ahler.
We briefly explain the set up for the elliptic stability.

\smallskip

We assume that $\theta$ is a smooth form representing a big cohomology class.
A $\theta$-psh function $\f$ is no longer bounded on $X$, but it can have {\it minimal singularities}.
The function
$$
V_{\theta}=\sup \{ u \in {\rm PSH}(X,\theta) \; ; \; u \leq 0 \}
$$
is an example of $\theta$-psh with minimal singularities. Any other $\theta$-psh function $\f$ with minimal singularities
satisfies $||\f-V_{\theta}||_{L^{\infty}(X)}<+\infty$.

The pluripotential theory in big cohomology classes has been developed in \cite{Boucksom_Eyssidieux_Guedj_Zeriahi_2010_Big}.
In short, the Bedford-Taylor theory can be developed, replacing $X$ by the {\it ample locus} of $\theta$,
a Zariski open subset in which $V_{\theta}$ is locally bounded.

The a priori estimate of Ko{\l}odziej can be extended, as well as \ref{thm: main thm A}.
It suffices indeed to establish the following:

\begin{thm} 
                Fix $p>1$ and assume that $0\leq f,g\in L^p(X,dV)$, and $\varphi,\psi$ are  
                $\theta$-psh  functions on $X$ with minimal singularities such that 
        \[
        {\rm MA}_{\theta}(\varphi) \geq  e^{\varphi} fdV \ ; \   {\rm MA}_{\theta}(\psi) \leq  e^{\psi} gdV. 
        \]
        Then there is a constant $C>0$ depending on $p,n,X,\theta$ such that
        \[
        \varphi \leq \psi + \left (C + 2\sup_X |V_{\theta}-\varphi|  + 2 \sup_X \max(V_{\theta}-\psi,0) \right) \exp{\left(\frac{\sup_X \varphi}{n}\right)} \|f-g\|_p^{1/n}.  
        \]
\end{thm}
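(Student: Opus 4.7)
The proof follows the perturbation scheme of Theorem \ref{thm: thm Abis}, with Ko{\l}odziej's $L^\infty$ estimate replaced by its extension to big cohomology classes \cite{Boucksom_Eyssidieux_Guedj_Zeriahi_2010_Big}, and ${\rm MA}_\theta$ interpreted as the non-pluripolar product on $X$. Normalize so that $\int_X dV = {\rm Vol}(\theta) = 1$. If $\|(g-f)_+\|_p = 0$, then $\varphi$ is a subsolution and $\psi$ a supersolution of ${\rm MA}_\theta(u) = e^u f\, dV$, and the comparison principle for $\theta$-psh functions with minimal singularities (cf.\ \cite{Boucksom_Eyssidieux_Guedj_Zeriahi_2010_Big}) gives $\varphi \leq \psi$. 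Otherwise, set $\e := \exp(\sup_X \varphi / n)\|(g-f)_+\|_p^{1/n}$; when $\e \geq 1/2$, the estimate follows from the crude bound
$\sup_X \varphi - \inf_X \psi \leq \sup_X |V_\theta - \varphi| + \sup_X \max(V_\theta - \psi, 0) + O(1)$.

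Assume now $\e < 1/2$. The $L^\infty$ estimate in big classes produces a $\theta$-psh function $\rho$ with minimal singularities such that
\[
{\rm MA}_\theta(\rho) = \left(a + \frac{(g-f)_+}{\|(g-f)_+\|_p}\right) dV, \qquad \sup_X (\rho - V_\theta) = 0,
\]
for a suitable normalization $a \in [0,1]$, and satisfying $\|\rho - V_\theta\|_\infty \leq C_1$ for a constant $C_1$ depending only on $X,\theta,p$, since the density has $L^p$-norm at most $2$. Define
\[
\varphi_\e := (1-\e)\varphi + \e\rho - C_2 \e + n \log(1-\e), \qquad C_2 := \sup_X (\rho - \varphi),
\]
which is finite because $\varphi$ and $\rho$ both have minimal singularities, and satisfies $C_2 \leq C_1 + \sup_X |V_\theta - \varphi|$. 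Then $\varphi_\e$ is $\theta$-psh with minimal singularities and $\varphi_\e \leq \varphi + n \log(1-\e)$. Convexity of the non-pluripolar product yields
\[
{\rm MA}_\theta(\varphi_\e) \geq (1-\e)^n {\rm MA}_\theta(\varphi) + \e^n {\rm MA}_\theta(\rho) \geq e^{\varphi_\e}\bigl(f + (g-f)_+\bigr) dV \geq e^{\varphi_\e} g\, dV,
\]
exactly as in the K\"ahler case. The comparison principle for minimal singularities then forces $\varphi_\e \leq \psi$, which unwinds into the claimed inequality upon using $\|(g-f)_+\|_p \leq \|f-g\|_p$.

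The main obstacle is technical rather than conceptual: one must verify that the convexity inequality for non-pluripolar products, along with the comparison and domination principles used in Theorem \ref{thm: thm Abis}, extend to $\theta$-psh functions with minimal singularities in the big class $\{\theta\}$. All of these ingredients are furnished by the pluripotential theory developed in \cite{Boucksom_Eyssidieux_Guedj_Zeriahi_2010_Big}, so the K\"ahler argument transfers verbatim once oscillations $\mrm{osc}_X(\cdot)$ are replaced by the big-class substitutes $\sup_X |V_\theta - \cdot|$ and $\sup_X \max(V_\theta - \cdot, 0)$ appearing in the statement.
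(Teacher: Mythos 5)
Your proof is correct and follows essentially the same perturbation scheme as the paper: build $\rho$ with minimal singularities solving the auxiliary Monge--Amp\`ere equation via the $L^\infty$ estimate of \cite{Boucksom_Eyssidieux_Guedj_Zeriahi_2010_Big}, form the barrier $\varphi_\e=(1-\e)\varphi+\e\rho-C_2\e+n\log(1-\e)$, and invoke the comparison principle for minimal singularities. The only differences are cosmetic (you normalize $\rho$ by $\sup_X(\rho-V_\theta)=0$ rather than $\sup_X\rho=0$, take $C_2=\sup_X(\rho-\varphi)$ rather than $\sup_X(V_\theta-\varphi)$, and use $(g-f)_+$ rather than $|f-g|$); one small slip is writing $\sup_X\varphi-\inf_X\psi$ in the $\e\geq 1/2$ case, which is $+\infty$ in the big setting --- the quantity to bound is $\sup_X(\varphi-\psi)\leq\sup_X|V_\theta-\varphi|+\sup_X\max(V_\theta-\psi,0)$, which is indeed what your displayed crude bound effectively records.
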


\begin{proof}
Set $\varepsilon:= e^{\sup_X \varphi/n} \|f-g\|_p^{1/n}$. If $\varepsilon \geq 1/2$ then 
\begin{flalign*}
	\sup_X &(\varphi-\psi) \leq  \left  ( \sup_X| \varphi-V_{\theta}| + \sup_X \max(V_{\theta}-\psi,0) \right ) \\
	& \leq   2\left( \sup_X| \varphi-V_{\theta}| + \sup_X \max(V_{\theta}-\psi,0) \right) \exp{\left(\frac{\sup_X \varphi}{n}\right)} \|f-g\|_p^{1/n},
\end{flalign*}
as desired. So we can assume that $\varepsilon<1/2$.   H\"older inequality yields
\[
\int_X \frac{|f-g|}{\|f-g\|_p} dV \leq 1. 
\]  

Let $\rho$  be the unique $\theta$-psh function with minimal singularities such that 
\[
{\rm MA}_{\theta}(\rho) = hdV := \left (a + \frac{|f-g|}{\|f-g\|_p}\right)dV, \ \sup_X \rho =0,
\]
where $a\geq 0$ is a normalization constant to insure that $\int_X hdV = \int_X dV$.  

Since $\|h\|_p\leq 2$, it follows from \cite[Theorem 4.1]{Boucksom_Eyssidieux_Guedj_Zeriahi_2010_Big}  that 
\[
-C_1\leq \rho-V_{\theta} \leq 0,
\] where $C_1$ only  depends on $p,\theta,n$.  Consider now 
\[
\varphi_{\varepsilon}:= (1-\varepsilon)\varphi + \varepsilon \rho -C_2 \varepsilon + n\log (1-\varepsilon),
\]
 where $C_2$ is a positive constant to be specified hereafter.  Then $\varphi_{\varepsilon}$ is a  $\theta$-psh function with minimal singularities, and a direct computation shows that 
\begin{eqnarray*}
        {\rm MA}_{\theta}( \varphi_{\varepsilon}) & \geq & (1-\varepsilon)^n{\rm MA}_{\theta}(\varphi) + \varepsilon^n {\rm MA}_{\theta}(\rho)\\
       &  \geq & e^{\varphi + n\log (1-\varepsilon)} fdV+ e^{\varphi}|f-g| dV.  
\end{eqnarray*}
If we choose $C_2=\sup_X (V_{\theta}-\varphi)$ then $\rho-\varphi\leq C_2$ and $\varphi_{\varepsilon} \leq \varphi + n\log (1-\varepsilon) \leq \varphi$. So we can continue the above estimate to arrive at 
\begin{equation}\label{eq: u eps is subsol}
{\rm MA}_{\theta}( \varphi_{\varepsilon}) \geq e^{\varphi_{\varepsilon}} (f+|f-g|)dV\geq e^{\varphi_{\varepsilon}}g dV.        
\end{equation}

It follows from  \eqref{eq: u eps is subsol}  that $\varphi_{\varepsilon}$ is a subsolution and $\psi$ is a supersolution of the equation ${\rm MA}_{\theta}(\phi)=e^{\phi}gdV$. The comparison principle \cite[Proposition 6.3]{Boucksom_Eyssidieux_Guedj_Zeriahi_2010_Big} insures that $\varphi_{\varepsilon}\leq \psi$, hence
\begin{eqnarray*}
        \varphi-\psi &= &\varphi_{\varepsilon}-\psi + \varepsilon (\varphi-\rho)  + C_2\varepsilon - n\log (1-\varepsilon)\\
        &\leq & (C_1 +C_2 + \sup_X (\varphi-V_{\theta}) + 2n) \varepsilon\\
        &= & (C_1+{\rm osc}_X (\varphi-V_{\theta})+2n) \exp{\left (\frac{\sup_X \varphi}{n}\right )} \|f-g\|_p^{1/n}.  
\end{eqnarray*}
The result follows since $C_1$ only depends on $p,\theta,n$. 
\end{proof}

 \end{document}